\theoremstyle{plain}
\newtheorem{master}{Master}[section]
\newtheorem{prop}[master]{Proposition}
\newtheorem{thm}[master]{Theorem}
\newtheorem{lem}[master]{Lemma}
\newtheorem{cor}[master]{Corollary}
\newtheorem{question}[master]{Question}
\newtheorem{Def}[master]{Definition}
\newcommand{\Com}{\mathbb{C}}
\newcommand{\Hil}{\mathcal{H}}
\newcommand{\re}{\mathrm{Re}}
\newcommand{\Rep}{\mathrm{Rep}}
\newcommand \NN{\mathbb{N}}
\newcommand \QQ{\mathbb{Q}}
\newcommand \CC{\mathbb{C}}
\theoremstyle{definition}
\theoremstyle{remark}
\newtheorem{remark}[master]{Remark}
\numberwithin{equation}{section}
\title[Property (T) and unitary representations]{Property (T), finite-dimensional representations, and generic representations}
\author{Michal Doucha}
\address{Institute of Mathematics, Czech Academy of Sciences, \v Zitn\' a 25, 115 67 Praha 1, Czech republic}
\email{doucha@math.cas.cz}
\author{Maciej Malicki}
\address{Department of Mathematics and Mathematical Economics, Warsaw School of Economics, al. Niepodleg\l o\' sci 162, 02-554 Warsaw, Poland}
\email{mamalicki@gmail.com}
\author{Alain Valette}
\address{Institut de Math\'ematiques, Universit\'e de Neuch\^atel, Unimail, 11 Rue Emile Argand, 2000 Neuch\^atel, Switzerland}
\email{alain.valette@unine.ch}
\keywords{property (T), Wang's theorem, unitary representations, generic representations, Koopman representations}
\subjclass[2010]{22D10}
\begin{document}

\begin{abstract} Let $G$ be a discrete group with property (T). It is a standard fact that, in a unitary representation of $G$ on a Hilbert space $\Hil$, almost invariant vectors are close to invariant vectors, in a quantitative way. We begin by showing that, if a unitary representation has some vector whose coefficient function is close to a coefficient function of some finite-dimensional unitary representation $\sigma$, then the vector is close to a sub-representation isomorphic to $\sigma$: this makes quantitative a result of P.S. Wang \cite{Wa}. We use that to give a new proof of a result by D. Kerr, H. Li and M. Pichot \cite{KLP}, that a group $G$ with property (T) and such that $C^*(G)$ is residually finite-dimensional, admits a unitary representation which is generic (i.e. the orbit of this representation in $\Rep(G,\Hil)$ under the unitary group $U(\Hil)$ is comeager). We also show that, under the same assumptions, the set of representations equivalent to a Koopman representation is comeager in $\Rep(G,\Hil)$.
\end{abstract}

\maketitle

\section{Introduction}

Let $G$ be a discrete group and $\pi$ be a unitary representation of $G$ on some Hilbert space $\Hil$. For a finite set $F\subset G$ and $\varepsilon>0$, a vector $\xi\in\Hil$ is $(F,\varepsilon)$-invariant if $\max_{g\in F}\|\pi(g)\xi-\xi\|<\varepsilon$. Recall that $\pi$ almost has invariant vectors if, for every pair $(F,\varepsilon)$, the group $G$ has $(F,\varepsilon)$-vectors; and that the group $G$ has \emph{Kazhdan's property (T)} or is a \emph{Kazhdan group} if every unitary representation of $G$ almost having invariant vectors, has non-zero invariant vectors; see e.g. \cite{BdlHV} for Property (T). The definition can be reformulated in terms of weak containment of representations: $G$ has Property (T) if every unitary representation weakly containing the trivial representation of $G$, contains it strongly (see Remark 1.1.2 in \cite{BdlHV}). Crucial for us is an equivalent characterization due to P.S. Wang (Corollary 1.9 and Theorem 2.1 in \cite{Wa}): the group $G$ has property (T) if and only if for some (hence every) irreducible finite-dimensional unitary representation $\sigma$ of $G$, every unitary representation $\pi$ of $G$ that contains $\sigma$ weakly, contains it strongly.

It is a simple but useful fact that, if $G$ has property (T) and $\pi$ is a unitary representation almost having invariant vectors, ``almost invariant vectors are close to invariant vectors''. More precisely:

\begin{prop}[Proposition 1.1.9 in \cite{BdlHV}]\label{PropBHV} Let $G$ be a Kazhdan group. If $S$ is a finite generating set of $G$ and $\varepsilon_0$ is the corresponding Kazhdan constant, then for every $\delta\in ]0,1[$ and every unitary representation $\pi$ of $G$, any $(S,\varepsilon_0\delta)$-invariant vector $\xi$ satisfies $\|\xi-P\xi\|\leq\delta\|\xi\|$, where $P$ is the orthogonal projection on the subspace of $\pi(G)$-invariant vectors.
\hfill$\square$
\end{prop}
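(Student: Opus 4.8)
The plan is to split $\Hil$ along the subspace of invariant vectors and to transport the $(S,\varepsilon_0\delta)$-invariance of $\xi$ onto its component orthogonal to the fixed vectors, where the Kazhdan constant forces that component to be small. The one input I will use is the defining property of the constant $\varepsilon_0$ attached to the generating set $S$: for every unitary representation $\rho$ of $G$ having \emph{no} non-zero invariant vector and every vector $\eta$ in its space, one has $\max_{s\in S}\|\rho(s)\eta-\eta\|\geq\varepsilon_0\|\eta\|$ (the defining inequality for unit vectors, rescaled by homogeneity).

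First I would introduce the relevant decomposition. Let $\Hil^G$ denote the closed subspace of $\pi(G)$-invariant vectors, with orthogonal projection $P$. Since $\Hil^G$ is $\pi(G)$-invariant and $\pi$ is unitary, its orthogonal complement $\Hil':=(\Hil^G)^\perp$ is again $\pi(G)$-invariant; write $\pi'$ for the restriction of $\pi$ to $\Hil'$. By construction $\pi'$ has no non-zero invariant vector, since any such vector would lie in $\Hil^G\cap\Hil'=\{0\}$.

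Now write $\xi=P\xi+\eta$ with $\eta:=\xi-P\xi\in\Hil'$. Because $P\xi$ is fixed by $\pi(G)$, for every $s\in S$ we have
\[
\pi(s)\eta-\eta=\pi(s)\xi-\pi(s)P\xi-\xi+P\xi=\pi(s)\xi-\xi ,
\]
so $\max_{s\in S}\|\pi'(s)\eta-\eta\|=\max_{s\in S}\|\pi(s)\xi-\xi\|$; in other words $\eta$ is, inside the fixed-point-free representation $\pi'$, exactly as almost-invariant as $\xi$ is. Applying the Kazhdan-constant inequality to $\pi'$ and $\eta$ then gives
\[
\varepsilon_0\|\xi-P\xi\|=\varepsilon_0\|\eta\|\leq\max_{s\in S}\|\pi'(s)\eta-\eta\|=\max_{s\in S}\|\pi(s)\xi-\xi\|<\varepsilon_0\delta\|\xi\| ,
\]
using the $(S,\varepsilon_0\delta)$-invariance of $\xi$ at the last step. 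Dividing by $\varepsilon_0>0$ yields $\|\xi-P\xi\|\leq\delta\|\xi\|$, as claimed (the case $\eta=0$ being trivial).

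I expect the only real point to watch is the normalization. The notion of $(S,\varepsilon)$-invariance and the Kazhdan constant must be taken relative to the norm of the vector, as in Definition 1.1.1 of \cite{BdlHV} where the condition reads $\max_{s\in S}\|\pi(s)\xi-\xi\|<\varepsilon\|\xi\|$; it is precisely this homogeneity that makes the final estimate come out with the factor $\delta$ rather than an absolute constant. The substantive mathematical content is otherwise the elementary observation that the orthogonal complement of the fixed vectors is a $\pi(G)$-invariant, fixed-point-free subrepresentation on which $\eta$ inherits the almost-invariance of $\xi$, so that the Kazhdan constant can bite there.
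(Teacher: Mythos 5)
Your proof is correct and is exactly the standard argument for Proposition 1.1.9 in \cite{BdlHV}; the paper itself offers no proof, simply citing that source, and your decomposition $\xi=P\xi+\eta$ with the Kazhdan inequality applied to the fixed-point-free complement is precisely what is done there. Your closing remark about normalization is well taken: the paper's introduction defines $(F,\varepsilon)$-invariance without the factor $\|\xi\|$, whereas the stated conclusion $\|\xi-P\xi\|\leq\delta\|\xi\|$ requires the normalized convention of \cite{BdlHV} (or restricting to unit vectors), and your proof correctly uses the latter.
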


For a Kazhdan group $G$ and a unitary representation $\pi$ of $G$, fix a unit vector $\xi$ and look at the coefficient function
$$\phi_{\pi,\xi}(g)=\langle\pi(g)\xi,\xi\rangle\;(g\in G).$$
The question we first address in this paper is: if $\phi_{\pi,\xi}$ is close to some coefficient of an irreducible finite-dimensional unitary representation $\sigma$ of $G$, must $\xi$ be close to a finite-dimensional invariant subspace of $\pi$ carrying a sub-representation isomorphic to $\sigma$? We will see that, in analogy to Proposition \ref{PropBHV}, the answer is positive - with some care. 

\begin{Def}\label{good} Let $G$ be a finitely generated group with a symmetric finite generating set $S\subseteq G$ and let $\phi$ be some normalized positive definite function on $G$ associated with a unitary irreducible representation $\sigma$, of finite dimension $d$. Let $\pi$ be some unitary representation of $G$ on $\Hil$. Let $\varepsilon>0$. Say that a unit vector $\xi\in\Hil$ is $(\pi,\phi,\varepsilon)$-good if for every $s\in S^{2d^2+1}$ we have $|\phi_{\pi,\xi}(s)-\phi(s)|<\varepsilon$.
\end{Def}

Note that $S^k$ is just the ball of radius $k$ centered at the identity in $G$. So there is a certain lack of uniformity in Definition \ref{good}: we require an approximation of $\phi_{\pi,\xi}$ by $\phi$ on a ball whose size depends on the dimension of the representation $d$. Our main result, proved in section 2, can be viewed as a quantitative version of Wang's result.

\begin{thm}\label{mainresult}
Let $G$ be a discrete Kazhdan group, $S$ a finite symmetric generating set with $e\in S$, and let $\phi$ be a normalized positive definite function on $G$ associated with a finite-dimensional unitary irreducible representation $\sigma$ of $G$. For every $0<\delta<1$ there exists $\varepsilon_{\phi, \delta}>0$ such that for every unitary representation $\pi$ of $G$ on a Hilbert space $\Hil$, and every unit vector $x \in \Hil$ that is $(\pi,\phi,\varepsilon_{\phi, \delta})$-good, there exists a unit vector $x' \in \Hil$ with $\|x-x' \|\leq\delta$ such that the restriction of $\pi$ to the span of $\pi(G)x'$ is isomorphic to $\sigma$.
\end{thm}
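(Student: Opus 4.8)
The plan is to deduce the statement from the quantitative form of property (T) recorded in Proposition \ref{PropBHV}, applied not to $\pi$ itself but to the tensor product $\rho = \pi\otimes\bar\sigma$ acting on $\mathrm{HS}(\Hil_\sigma,\Hil)$, the space of Hilbert--Schmidt operators $T\colon\Hil_\sigma\to\Hil$ (where $\Hil_\sigma=\Com^d$ carries $\sigma$), with $\rho(g)T = \pi(g)\,T\,\sigma(g)^*$. The $\rho(G)$-invariant vectors are precisely the intertwiners, i.e.\ the operators $T$ with $\pi(g)T = T\sigma(g)$ for all $g$; since $\sigma$ is irreducible, Schur's lemma gives $\mathrm{End}_G(\Hil_\sigma) = \Com\,\mathrm{Id}$, so every such nonzero $T$ satisfies $T^*T = \lambda\,\mathrm{Id}$ for some $\lambda>0$, and $\lambda^{-1/2}T$ is an isometric intertwiner whose range carries a copy of $\sigma$. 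Thus it suffices to produce, out of the good vector $x$, an operator that is almost $\rho(G)$-invariant, apply Proposition \ref{PropBHV} to $\rho$ to snap it to a genuine intertwiner $T$, and check that the resulting copy of $\sigma$ sits near $x$. The model computation guiding everything is that $\sigma\otimes\bar\sigma$ contains the trivial representation, its canonical invariant vector being $\mathrm{Id}_{\Hil_\sigma}\in\mathrm{HS}(\Hil_\sigma)$.

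Concretely, write $\phi(g)=\langle\sigma(g)\eta,\eta\rangle$ for a unit cyclic vector $\eta$ (cyclic because $\sigma$ is irreducible). Choose group elements $g_1=e,g_2,\dots,g_d$ in a ball of radius $d-1$ whose orbit $\{\sigma(g_j)\eta\}_{j=1}^d$ is a basis of $\Hil_\sigma$; this is possible since the span of $\{\sigma(g)\eta:g\in S^k\}$ strictly grows until it fills $\Hil_\sigma$. Define $A\in\mathrm{HS}(\Hil_\sigma,\Hil)$ by $A\,\sigma(g_j)\eta = \pi(g_j)x$, extended linearly; in particular $A\eta = x$. In the ``model'', replacing $(\pi,x)$ by $(\sigma,\eta)$ turns $A$ into $\mathrm{Id}_{\Hil_\sigma}$, which is exactly $\rho$-invariant with $\|\cdot\|_{\mathrm{HS}}=\sqrt d$. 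The point is that the Gram matrix of $\{\pi(g_j)x\}$, the norm $\|A\|_{\mathrm{HS}}$, and the invariance defect $\|\rho(s)A-A\|_{\mathrm{HS}}$ $(s\in S)$ all expand into finitely many values $\langle\pi(h)x,x\rangle$ with $h$ ranging over products $g_i^{-1}g_j$ and $g_i^{-1}sg_j$; each such $h$ lies in a ball whose radius is linear in $d$, and all of them lie in $S^{2d^2+1}$, so by $(\pi,\phi,\varepsilon)$-goodness each value is within $\varepsilon$ of the corresponding exact value computed with $(\sigma,\eta)$, where the defect vanishes. Since the coefficients expressing $\sigma(sg_j)\eta$ in the basis $\{\sigma(g_i)\eta\}$, and the inverse Gram matrix, are fixed data depending only on $\phi$ and the $g_j$ (not on $\pi$), one gets bounds $\bigl|\,\|A\|_{\mathrm{HS}}^2 - d\,\bigr|\le C_\phi\varepsilon$ and $\|\rho(s)A-A\|_{\mathrm{HS}}\le C_\phi\varepsilon$ with $C_\phi$ independent of $\pi$. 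The generous radius $2d^2+1$ comfortably contains every product that occurs; structurally it reflects Burnside's theorem, which guarantees that the full matrix structure of $\sigma$ is reconstructible from words in $S$.

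With these estimates in hand, fix the Kazhdan constant $\varepsilon_0$ for the generating set $S$; it governs every unitary representation of $G$, in particular $\rho$. Given $\delta$, choose an auxiliary $\delta'$ and then $\varepsilon=\varepsilon_{\phi,\delta}$ small enough that $A/\|A\|_{\mathrm{HS}}$ is $(S,\varepsilon_0\delta')$-invariant for $\rho$. Proposition \ref{PropBHV} then yields the orthogonal projection $T=PA$ onto the invariants with $\|A-T\|_{\mathrm{HS}}\le\delta'\|A\|_{\mathrm{HS}}$. Thus $T$ is a genuine intertwiner, nonzero since $\|T\|_{\mathrm{HS}}\ge(1-\delta')\|A\|_{\mathrm{HS}}>0$, and $T^*T=\lambda\,\mathrm{Id}$ by Schur with $\lambda$ close to $1$ because $A^*A$ is close to the model value $\mathrm{Id}$. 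Put $V=\lambda^{-1/2}T$, an isometric intertwiner, and $x'=V\eta$, a unit vector. Then $\overline{\pi(G)x'}=V(\Hil_\sigma)$ and $\pi$ restricted to it is isomorphic to $\sigma$ via $V$; finally $\|x-x'\|=\|A\eta-V\eta\|$ is controlled by $\|A-V\|_{\mathrm{HS}}\le\|A-T\|_{\mathrm{HS}}+\|T-V\|_{\mathrm{HS}}$, both terms small, so $\|x-x'\|\le\delta$ once $\delta'$ (hence $\varepsilon_{\phi,\delta}$) is chosen appropriately.

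The main obstacle is the uniform bookkeeping of constants: everything must be made independent of the ambient representation $\pi$ and of $\Hil$. This is exactly what the tensor-product reformulation buys us, since property (T) supplies a single Kazhdan constant $\varepsilon_0$ valid for all $\rho=\pi\otimes\bar\sigma$, while the amplification factors --- the inverse Gram matrix of the fixed orbit basis and the expansion coefficients of $\sigma(sg_j)\eta$ --- depend only on $\sigma$, $\eta$, and the chosen $g_j$. The delicate point is to propagate the single scalar $\varepsilon$ through three linked estimates simultaneously: that $A$ is almost invariant, that the normalization scalar $\lambda$ is close to $1$ (so that $V$ really is isometric and the restriction is genuinely isomorphic, not merely weakly contained), and that $\|x-x'\|\le\delta$. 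Choosing $\varepsilon_{\phi,\delta}$ last, after all the fixed constants $C_\phi$, $d$, $\varepsilon_0$ are in place, closes the argument.
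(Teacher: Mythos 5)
Your proposal is correct and follows essentially the same route as the paper: build from the good vector $x$ an almost-invariant element of the Hilbert--Schmidt space carrying $\overline{\sigma}\otimes\pi$, snap it to a genuine intertwiner via Proposition \ref{PropBHV}, and invoke Schur's lemma to extract the copy of $\sigma$ near $x$. The only (harmless) differences are technical: the paper constructs the almost-invariant vector using group-ring elements $f_i\in\mathbb{C}S^{k(\sigma)}$ with controlled $\ell^1$-norm sending the cyclic vector to an orthonormal basis, whereas you use an orbit basis $\{\sigma(g_j)\eta\}$ with $g_j\in S^{d-1}$ and its inverse Gram matrix (which in fact needs only the ball $S^{2d-1}$ rather than $S^{2k(\sigma)+1}$), and your bound on the invariance defect should read $\|\rho(s)A-A\|_{\mathrm{HS}}\le C_\phi\sqrt{\varepsilon}$ rather than $C_\phi\varepsilon$, which only changes the final dependence of $\varepsilon_{\phi,\delta}$ on $\delta$ to the quadratic one the paper also obtains.
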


In section 3, we apply Theorem \ref{mainresult} to the study of the global structure of the space of unitary representations of Kazhdan groups. Let us start with the notation. Let $G$ be an arbitary countable group and let $\Hil$ be a separable infinite-dimensional Hilbert space. The set $\Rep(G,\Hil)$ of all homomorphisms from $G$ into the unitary group $U(\Hil)$ can be viewed as a closed subset of the product space $U(\Hil)^G$, when we equip $U(\Hil)$ with the strong operator topology. With this identification, $\Rep(G,\Hil)$ is a Polish (i.e. separable and completely metrizable) space. We refer the reader to the monograph \cite{Ke2}, especially to the section on the spaces of unitary representations, for more information about this point of view on unitary representations. Recall that two unitary representations $\pi_1,\pi_2\in\Rep(G,\Hil)$ are \emph{isomorphic}, or \emph{unitarily equivalent} if there is a unitary operator $\phi\in U(\Hil)$ such that $\pi_1(g)=\phi\pi_2(g)\phi^*$, for every $g\in G$. Notice that this is an orbit equivalence relation given by the action of the unitary group $U(\Hil)$ on the space $\Rep(G,\Hil)$ by conjugation. Kechris raised a question (see again the section on the space of unitary representations in \cite{Ke2}) if there are countable groups with a \emph{generic unitary representation}, where ``generic'' here means its conjugacy class is large in the sense of Baire category, i.e. a representation whose class under the unitary equivalence contains a dense $G_\delta$ subset. As a matter of fact, we mention that it follows from the topological zero-one law that for every countable group $G$ either there is a generic representation in $\Rep(G,\Hil)$, or all conjugacy classes are meager (see e.g. Theorem 8.46 in \cite{Ke1}; to apply it, note that there is a dense conjugacy class in $\Rep(G,\Hil)$ --- indeed, take some countable dense set of representations from $\Rep(G,\Hil)$ and consider their direct sum).

Here as an application of Theorem \ref{mainresult} we prove the following result.

\begin{thm}\label{thm:generic_rep}
Let $G$ be a discrete Kazhdan group such that finite-dimensional representations are dense in the unitary dual $\hat{G}$. Then there is a generic unitary representation of $G$.
\end{thm}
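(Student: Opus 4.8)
The plan is to construct a single representation $\pi_0 \in \Rep(G,\Hil)$ whose conjugacy class is dense $G_\delta$. By the topological zero-one law mentioned in the excerpt, it suffices to exhibit one non-meager conjugacy class; equivalently, I would directly produce a representation whose orbit under $U(\Hil)$ is comeager. The natural candidate is a representation that contains \emph{every} finite-dimensional irreducible representation of $G$ with infinite multiplicity, i.e. $\pi_0 = \bigoplus_{\sigma} \sigma^{\oplus\infty}$ where $\sigma$ ranges over a countable dense family of finite-dimensional irreducibles (such a countable family exists since the finite-dimensional part of $\hat G$ is dense and $\hat G$ is second countable). The intuition is that property (T) forces finite-dimensional subrepresentations to be isolated, so generic representations should be built entirely from them.

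First I would verify density of the orbit of $\pi_0$. A basic open set in $\Rep(G,\Hil)$ is determined by finitely many group elements, finitely many vectors, and a tolerance; approximating an arbitrary representation on such data reduces, via the spectral/coefficient picture, to approximating its coefficient functions, and since finite-dimensional representations are dense in $\hat G$ these can be matched by coefficients coming from the $\sigma$'s appearing in $\pi_0$. Up to conjugation this places $\pi_0$ into any given basic open set, giving a dense orbit.

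The heart of the argument, and the main obstacle, is showing the orbit is comeager, and this is exactly where Theorem \ref{mainresult} enters. The strategy is to show the orbit is $G_\delta$ by characterizing membership in the closure of the orbit through a countable family of \emph{open} conditions, and then conclude via density. Concretely, for each finite-dimensional irreducible $\sigma$ in my dense list, each rational $\delta$, and each finite piece of data, the set of $\pi$ possessing a unit vector that is $(\pi,\phi_\sigma,\varepsilon_{\phi_\sigma,\delta})$-good is open (the good condition only constrains $\phi_{\pi,\xi}$ on a finite ball $S^{2d^2+1}$, so it is witnessed by open conditions on finitely many matrix coefficients). Theorem \ref{mainresult} upgrades each such good vector to a genuine copy of $\sigma$ inside $\pi$, with explicit norm control $\delta$. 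Iterating over all $\sigma$ in the list and all multiplicities, and using the $\ell^2$-summable error control from the quantitative bounds, I would build, for a comeager set of $\pi$, an isometric intertwiner realizing $\pi_0$ as a subrepresentation of $\pi$ with all the required multiplicities; property (T) is what guarantees the approximate copies are close to exact orthogonal copies rather than merely approximately invariant.

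Finally, to get equality of orbits rather than mere containment, I expect to need the converse inclusion: a generic $\pi$ should contain \emph{no} infinite-dimensional irreducible and no other finite-dimensional irreducibles beyond those in $\pi_0$, so that $\pi \cong \pi_0$. Here I would argue that the complement conditions are meager — for a generic $\pi$ the finite-dimensional part must account for everything, again leveraging that isolated finite-dimensional representations are generic under (T) while the rest is negligible in the sense of Baire category. The delicate point, and where I would spend the most care, is assembling the infinitely many approximate intertwiners into a single unitary conjugacy with summable errors so that a convergent product of unitaries actually implements $\pi \cong \pi_0$; this is a standard but technical back-and-forth (Fr\'echet--Cauchy) construction in $U(\Hil)$, made possible precisely by the quantitative $\delta$-control furnished by Theorem \ref{mainresult}.
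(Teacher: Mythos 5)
Your candidate for the generic representation --- the direct sum of all finite-dimensional irreducibles, each with infinite multiplicity --- is exactly the paper's, and you correctly identify Theorem \ref{mainresult} as the tool for upgrading good vectors to genuine finite-dimensional subrepresentations. But there is a genuine gap at the crucial step: you never give a workable mechanism for showing that, generically, the finite-dimensional subrepresentations \emph{exhaust} the whole Hilbert space. Knowing that a comeager set of $\pi$'s contains a copy of each $\sigma$ with infinite multiplicity only yields $\pi_0$ as a subrepresentation; the assertion that ``the rest is negligible in the sense of Baire category'' is precisely what needs proof, and your sketch of it is circular. The paper's device is to impose, as a $G_\delta$ condition, that for every basic open set $U_n$ of the unit sphere and every $\delta>0$ there is a vector $x\in U_n$ which is a \emph{finite linear combination} $x=\sum_i c_i x_i$ of pairwise orthogonal unit vectors, each $x_i$ being $(\pi,\phi_i,\varepsilon_{\phi_i,\delta/(|c_i|m)})$-good; Theorem \ref{mainresult} then perturbs each $x_i$ into a generator of a finite-dimensional subrepresentation, so $x$ lies within $\delta$ of the sum $\Hil_0$ of all finite-dimensional subrepresentations. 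If the orthogonal complement $\Hil_1$ were nonzero, one could choose $x$ with $\|x-P_{\Hil_0}x\|>\delta$ and obtain, by projecting one of the perturbed generators to $\Hil_1$, a finite-dimensional invariant subspace of $\Hil_1$ --- a contradiction. This decomposition-of-vectors condition, quantified over a countable basis of the sphere, is the idea your proposal is missing; without it you cannot rule out an infinite-dimensional ``remainder'' in a generic $\pi$.

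Two further remarks. First, your plan to prove that the orbit itself is $G_\delta$, and then to implement the isomorphism $\pi\cong\pi_0$ by a convergent infinite product of unitaries with summable errors, is both harder than necessary and not needed: the paper instead exhibits a dense $G_\delta$ set $X=X'\cap X''$ all of whose members are direct sums of finite-dimensional irreducibles each appearing with infinite multiplicity; any two such representations are unitarily equivalent by elementary matching of isotypic components, so $X$ lies in a single conjugacy class and no delicate back-and-forth construction arises. Second, the countability needed both for $\pi_0$ to live on a separable space and for the multiplicity condition $X''$ to be a countable intersection of open sets comes from Wang's theorem that a Kazhdan group has only finitely many irreducible representations of each finite dimension; appealing only to second countability of $\hat G$ gives a countable dense family but leaves open whether a generic $\pi$ could contain finite-dimensional irreducibles outside your chosen list.
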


We note that, although not explicitly stated there, this result already follows from a more general result of Kerr, Li and Pichot from \cite{KLP}, where they prove (see Theorem 2.5 there) that if $A$ is a separable C*-algebra where finite-dimensional representations are dense in $\hat{A}$, then there is a dense $G_\delta$ class in $\Rep(A,\Hil)$. Theorem \ref{thm:generic_rep} is then a special case for $A=C^*(G)$. Our proof is nevertheless done by more elementary means, in particular it does not invoke Voiculescu's theorem (see the proof of Theorem 2.5 in \cite{KLP} for details).\\

Another open question posed by Kechris as Problem H.16 in \cite{Ke2} is whether the subset of those representations $\pi\in\Rep(G,\Hil)$, where $G$ is still a countable group, that are equivalent to Koopman representations is meager in $\Rep(G,\Hil)$. Such representations are called \emph{realizable by an action} in \cite{Ke2}. Let us recall the terminology first. Let $(X,\mu)$ be a standard probability space (i.e. a space isomorphic to the unit interval $[0,1]$ equipped with the Lebesgue measure). Let $\alpha: G\curvearrowright (X,\mu)$ be an action of a countable group $G$ on $X$ by measure preserving measurable transformations.  Consider the unitary representation $\pi_\alpha: G\rightarrow L^2(X,\mu)$ defined by $\pi_\alpha(g)f(x)=f(\alpha(g^{-1},x))$, for every $f\in L^2(X,\mu)$. The \emph{Koopman representation} of $\alpha$ is the restriction of $\pi_\alpha$ to the invariant subspace $L^2_0(X,\mu)$, which is the orthogonal complement of the invariant subspace of constant functions.

In section 4 we prove the following result addressing the question of Kechris.

\begin{thm}\label{kechris}
Let $G$ be a discrete Kazhdan group such that finite-dimensional representations are dense in the unitary dual $\hat{G}$. Then the set of representations realizable by an action is comeager in $\Rep(G,\Hil)$.
\end{thm}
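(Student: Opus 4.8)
The plan is to combine Theorem~\ref{thm:generic_rep} with an explicit realization of the generic representation by a Gaussian action. First I would reduce the statement to a single representation: by Theorem~\ref{thm:generic_rep} there is a generic representation $\pi_0$, whose isomorphism class $C\subseteq\Rep(G,\Hil)$ is comeager. Being equivalent to a Koopman representation is invariant under unitary conjugation, so the set $R$ of representations realizable by an action is a union of isomorphism classes; hence $R$ is comeager as soon as it contains $C$, i.e. as soon as $\pi_0$ itself is realizable by an action. Thus everything reduces to exhibiting one measure preserving action $G\curvearrowright(X,\mu)$ on a standard probability space whose Koopman representation on $L^2_0(X,\mu)$ is isomorphic to $\pi_0$.

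Next I would pin down $\pi_0$ explicitly. Since $G$ is countable, $C^*(G)$ is separable and $\hat G$ is second countable; and since $G$ has property (T), Wang's theorem forces every finite-dimensional irreducible representation to be an isolated point of $\hat G$ (a sequence of distinct irreducibles converging to such a $\sigma$ would weakly but not strongly contain $\sigma$). In a second countable space the isolated points form a countable set, so the set $\hat G_{\mathrm{fd}}$ of classes of finite-dimensional irreducible representations is countable; by hypothesis it is also dense in $\hat G$. Using Theorem~\ref{mainresult} as in the proof of Theorem~\ref{thm:generic_rep}, the generic representation contains each $\sigma\in\hat G_{\mathrm{fd}}$ with infinite multiplicity; conversely every finite-dimensional $\sigma$ weakly contained in $\pi_0$ is, by Wang's theorem again, strongly contained. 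Together with the description of the generic representation as a direct sum of finite-dimensional irreducibles obtained when proving Theorem~\ref{thm:generic_rep}, this identifies it as $\pi_0\cong\bigoplus_{\sigma\in\hat G_{\mathrm{fd}}}\sigma^{\oplus\infty}$.

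Finally I would realize $\pi_0$ by a Gaussian action. Choose a real orthogonal representation $\rho$ of $G$ on a separable real Hilbert space whose complexification is $\rho_\CC\cong\bigoplus_{\sigma\in\hat G_{\mathrm{fd}}}\sigma^{\oplus\infty}$ (take the realification of each $\sigma$ with infinite multiplicity; the contragredients contributed by complexification already lie in $\hat G_{\mathrm{fd}}$). The associated Gaussian measure preserving action on a standard probability space has $L^2$ equal to the symmetric Fock space of $\rho_\CC$, so its Koopman representation on $L^2_0$ is $\bigoplus_{k\geq1}\mathrm{Sym}^k(\rho_\CC)$. Each symmetric power of the finite-dimensionally graded representation $\rho_\CC$ decomposes as a direct sum of finite-dimensional irreducibles, all lying in $\hat G_{\mathrm{fd}}$; the first power already supplies every $\sigma\in\hat G_{\mathrm{fd}}$ with infinite multiplicity, while no infinite-dimensional irreducible ever appears. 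Hence this Koopman representation is again $\bigoplus_{\sigma\in\hat G_{\mathrm{fd}}}\sigma^{\oplus\infty}\cong\pi_0$, so $\pi_0\in R$ and $R$ is comeager. The step I expect to be the main obstacle is precisely this last verification: controlling the full symmetric Fock space decomposition so as to confirm that it produces exactly the irreducibles of $\hat G_{\mathrm{fd}}$, each with infinite multiplicity and nothing else, and matching this cleanly with the explicit form of the generic representation established in the previous section.
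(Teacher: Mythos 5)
Your proposal is correct, but the realization step goes by a genuinely different route from the paper's. Both arguments reduce, as you do, to showing that the generic representation $\bigoplus_{\sigma}\infty\cdot\sigma$ (sum over the countably many finite-dimensional irreducibles of $G$, each with infinite multiplicity, as identified in the proof of Theorem \ref{thm:generic_rep}) is equivalent to a Koopman representation. The paper does this with a translation action: for each $\sigma_n$ it forms the compact group $K_n=\overline{\sigma_n(G)}\subseteq U(N_n)$ with Haar measure, lets $G$ act by left translation on a disjoint union of countably many rescaled copies of the $K_n$'s, and invokes Peter--Weyl to identify $L^2$ of this action with $\bigoplus_n\infty\cdot\sigma_n$; the passage to $L^2_0$ is harmless because the trivial representation occurs with infinite multiplicity. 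You instead use the Gaussian action attached to the realification of $\bigoplus_\sigma\infty\cdot\sigma$, whose Koopman representation on $L^2_0$ is $\bigoplus_{k\geq 1}\mathrm{Sym}^k(\rho_{\CC})$. The verification you flag as the main obstacle is in fact routine: $\mathrm{Sym}^k$ of a Hilbert direct sum of finite-dimensional subrepresentations splits as a Hilbert direct sum of finite tensor products of symmetric powers of the summands, each of which is finite-dimensional and hence decomposes into finite-dimensional irreducibles, all of which lie in the (full) set of finite-dimensional classes by definition; since the $k=1$ layer already contributes every class with infinite multiplicity, the total is again $\bigoplus_\sigma\infty\cdot\sigma$. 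As for what each approach buys: the paper's Lemma \ref{koopman} is stated for an arbitrary locally compact group with countably many finite-dimensional irreducibles and needs only Peter--Weyl, while yours requires the Gaussian/Fock-space machinery but avoids the auxiliary compact groups; and your observation that isolatedness of the finite-dimensional points in the second countable space $\hat{G}$ already forces countability is a clean substitute for the paper's appeal to Wang's theorem that a Kazhdan group has only finitely many irreducibles in each finite dimension.
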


Let us mention that the condition that finite-dimensional representations are dense in the unitary dual $\hat{G}$ is, by the result of Archbold from \cite{Ar}, equivalent with the statement that the full group C*-algebra $C^*(G)$ is residually finite-dimensional. That is in turn, by the result of Exel and Loring from \cite{EL} (see also \cite{PeUs}), equivalent with the statement that finite-dimensional representations are dense in $\Rep(G,\Hil)$, which we shall use in the proof. Note that we call a representation $\pi\in\Rep(G,\Hil)$ finite-dimensional if the subalgebra $\pi(G)$ generates in $B(\Hil)$ is finite-dimensional.

The existence of infinite discrete Kazhdan groups with residually finite-dimensional C*-algebras seems to be open --- see Question 7.10 in \cite{BdlHV} and also Question 6.5 of Lubotzky and Shalom in \cite{LuSh} where they ask if there are infinite discrete Kazhdan groups with property FD, which is strictly stronger than having a residually finite-dimensional C*-algebra (a group has \emph{property FD} if representations factoring through finite groups are dense in the unitary dual).

\begin{question} It is known that being residually finite is not a sufficient condition to have a residually finite-dimensional C*-algebra by a result of Bekka \cite{Be}. However how about being LERF? (Recall that a finitely generated group is LERF if any finitely generated subgroup is the intersection of the finite index subgroups containing it). Ershov and Jaikin-Zapirain constructed in \cite{ErJZ} a Kazhdan group which is LERF. Is its group C*-algebra residually finite-dimensional?
\end{question}
\begin{remark}
We note that on the other hand we cannot exclude that it is possible to prove by a different argument that for every infinite group $G$, all classes in $\Rep(G,\Hil)$ are meager. That would together with Theorem \ref{thm:generic_rep} give that there are no infinite Kazhdan groups with a residually finite-dimensional C*-algebra.
\end{remark}
\noindent{\bf Acknowledgements: }The first named author was supported by the GA\v CR project 16-34860L and RVO: 67985840.

\section{A quantitative version of Wang's theorem}

Let $G$ be an infinite, finitely generated group. Let $S$ be a finite, symmetric, generating set of $G$, with $e\in S$. Let $\mathbb{C}G$ be the complex group ring of $G$. 

\subsection{Quantifying the Burnside theorem}

Let $\sigma$ be an irreducible unitary representation of dimension $d$, i.e. a homomorphism $\sigma:G \rightarrow U_d(\mathbb{C})$ such that $\sigma(G)$ has no proper invariant subspace. The classical Burnside theorem says that $\sigma(\mathbb{C}G)=M_d(\mathbb{C})$, i.e. $\sigma(G)$ contains a basis of $M_d(\mathbb{C})$. 

\begin{Def} Set $k(\sigma)=\min\{k>0: \dim_\mathbb{C}span \,\sigma(S^k)=d^2\}$.
\end{Def}

\begin{lem}\label{burnside} There is a constant $C>0$ (only depending on $S$) such that $C\log d\leq k(\sigma)\leq d^2$.
\end{lem}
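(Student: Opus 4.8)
The plan is to establish the two inequalities separately: the upper bound $k(\sigma)\le d^2$ by a standard ``stabilizing filtration'' argument, and the lower bound $C\log d\le k(\sigma)$ by a crude cardinality count on the balls $S^k$.

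For the upper bound, I would set $V_k=\mathrm{span}_{\mathbb C}\,\sigma(S^k)\subseteq M_d(\mathbb C)$ and exploit that $e\in S$. This gives $S^k\subseteq S^{k+1}$ and, crucially, $S^{k+1}=S\cdot S^k$, so that $V_k\subseteq V_{k+1}$ and in fact $V_{k+1}=\sum_{s\in S}\sigma(s)V_k$. The key step is a stabilization lemma: if $V_{k+1}=V_k$ for some $k$, then $\sigma(s)V_k\subseteq V_k$ for every $s\in S$, whence $V_m=V_k$ for all $m\ge k$ by an immediate induction. Since Burnside's theorem gives $\bigcup_k V_k=M_d(\mathbb C)$, the filtration cannot stabilize below the full matrix algebra; hence as long as $\dim V_k<d^2$ we must have $\dim V_{k+1}\ge\dim V_k+1$. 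Starting from $\dim V_1\ge 1$, the dimension therefore reaches $d^2$ after at most $d^2-1$ strict increases, which yields $k(\sigma)\le d^2$.

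For the lower bound, I would simply observe that $\mathrm{span}\,\sigma(S^k)$ is spanned by the $|S^k|$ matrices $\sigma(g)$, $g\in S^k$, so attaining dimension $d^2$ forces $|S^k|\ge d^2$. Since $|S^k|\le|S|^k$, this gives $|S|^k\ge d^2$, i.e. $k\ge \frac{2\log d}{\log|S|}$. Taking $C=\frac{2}{\log|S|}$, which is positive and depends only on $S$ (as $|S|\ge 2$, $G$ being infinite), yields $C\log d\le k(\sigma)$.

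I expect the cardinality count to be entirely routine; the main obstacle is the stabilization lemma underlying the upper bound, namely justifying that the chain $V_k$ strictly increases until it fills $M_d(\mathbb C)$ and never stalls prematurely. The essential input there is that $V_{k+1}$ is generated from $V_k$ by left multiplication by $\sigma(S)$, together with $e\in S$, so that any equality $V_{k+1}=V_k$ propagates upward. I would double-check the boundary cases (the precise value of $\dim V_1$ and that $S$ is not degenerate) but anticipate no real difficulty there.
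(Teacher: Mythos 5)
Your proposal is correct and follows essentially the same route as the paper: the lower bound via the cardinality estimate $d^2\leq |S^{k(\sigma)}|\leq |S|^{k(\sigma)}$, and the upper bound via the observation that the chain $V_k=\mathrm{span}\,\sigma(S^k)$ must strictly increase until it fills $M_d(\mathbb{C})$, since any stabilization would make $V_k$ invariant under $\sigma(S)$ and hence (by Burnside) force $V_k=M_d(\mathbb{C})$ already. The only cosmetic difference is that you phrase the stabilization step as contradicting $\bigcup_k V_k=M_d(\mathbb{C})$, while the paper notes directly that a stable $V_k$ containing the identity contains $\sigma(G)$; both are the same argument.
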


\begin{proof} We have
$$d^2= \dim_\mathbb{C} span\, \sigma(S^{k(\sigma)})\leq |\sigma(S^{k(\sigma)})|\leq |S^{k(\sigma)}|\leq |S|^{k(\sigma)}.$$
Taking logarithms: $\frac{2}{\log |S|}\log d\leq k(\sigma)$.
To prove the upper bound, observe that the sequence $span\,\sigma(S^k)$ of subspaces of $M_d(\mathbb{C})$, is strictly increasing for $k<k(\sigma)$. Indeed, assume that $k$ is such that $span \,\sigma(S^k)= span\,\sigma(S^{k+1})$: this means that $span\,\sigma(S^k)$ is stable by left multiplication by $\sigma(S)$, hence by $\sigma(G)$ as $S$ is generating. Since the identity matrix is in $\sigma(S^k)$, we have $\sigma(G)\subset span\,\sigma(S^k)$, hence $k\geq k(\sigma)$. From this it is clear that $k(\sigma)\leq d^2$.
\end{proof}

\medskip
Let $v$ be a unit vector in $\mathbb{C}^d$. Since $v$ is cyclic for $\sigma(G)$, the map: 
$$T_v:\mathbb{C}S^{k(\sigma)}\rightarrow\mathbb{C}^d:f\mapsto\sigma(f)v$$
 is onto. Let $(\ker T_v)^\perp$ denote the orthogonal of $\ker T_v$ in $\mathbb{C}S^{k(\sigma)}$, let $U_v$ be the inverse of the map $T_v|_{(\ker T_v)^\perp}$. Endow $\mathbb{C}S^{k(\sigma)}$ with the $\ell^1$-norm, and let $\|U_v\|_{2\rightarrow 1}$ be the corresponding operator norm of $U_v$. So for every $w$ a unit vector in $\mathbb{C}^d$, there exists $f\in \mathbb{C}S^{k(\sigma)}$ with $\|f\|_1\leq \|U_v\|_{2\rightarrow 1}$, such that $\sigma(f)v=w$. 

\begin{lem}\label{compact} There exists $M>0$ such that for every two unit vectors $v, w\in\mathbb{C}^d$, there exists $f\in \mathbb{C}S^{k(\sigma)}$ with $\|f\|_1\leq M$, such that $\sigma(f)v=w$.
\end{lem}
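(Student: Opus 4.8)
The plan is to obtain $M$ as the supremum of $\|U_v\|_{2\to 1}$ over all unit vectors $v\in\mathbb{C}^d$, and to show this supremum is finite by a compactness argument. Indeed, once we know that $M:=\sup_{\|v\|=1}\|U_v\|_{2\to 1}<\infty$, the statement follows immediately from the discussion preceding the lemma: for any two unit vectors $v,w$ the element $f:=U_v w\in\mathbb{C}S^{k(\sigma)}$ satisfies $\sigma(f)v=T_v U_v w=w$ and $\|f\|_1\le\|U_v\|_{2\to1}\,\|w\|_2\le M$.

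First I would record that the rank of $T_v$ is independent of $v$. Since $\sigma$ is irreducible, every nonzero vector is cyclic, so $T_v$ is onto $\mathbb{C}^d$ for every unit vector $v$; hence $T_v$ has rank $d$ and $\ker T_v$ has the constant dimension $|S^{k(\sigma)}|-d$. This constancy of rank is precisely what will make the restricted inverse depend continuously on $v$.

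Next I would identify $U_v$ with the Moore--Penrose pseudoinverse of $T_v$. Viewing $T_v\colon\mathbb{C}S^{k(\sigma)}\to\mathbb{C}^d$ with respect to the standard inner products, surjectivity makes $T_vT_v^*\in M_d(\mathbb{C})$ invertible, and one checks that $U_v=T_v^*(T_vT_v^*)^{-1}$, since both maps invert $T_v$ and have range $(\ker T_v)^\perp$. Now the entries of $T_v$ are linear in the coordinates of $v$ and $\bar v$, so $v\mapsto T_v$, and hence $v\mapsto T_vT_v^*$, are continuous; as $T_vT_v^*$ is invertible for every unit $v$ and matrix inversion is continuous on the open set of invertible matrices, $v\mapsto U_v$ is continuous. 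Since any operator norm is a continuous function of the matrix entries, $v\mapsto\|U_v\|_{2\to1}$ is a continuous real-valued function on the unit sphere of $\mathbb{C}^d$.

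Finally, the unit sphere of $\mathbb{C}^d$ is compact, so this continuous function attains a finite maximum, which we take to be $M$; this completes the proof. The only point requiring care --- and the heart of the argument --- is the uniform non-degeneracy of $T_v$: it is the irreducibility of $\sigma$, equivalently the cyclicity of every nonzero vector, that keeps $T_vT_v^*$ invertible over the entire sphere, allowing continuity together with compactness to deliver the uniform bound.
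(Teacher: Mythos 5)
Your proof is correct and follows exactly the paper's route: the paper also takes $M=\max_{\|v\|=1}\|U_v\|_{2\rightarrow 1}$ and invokes compactness of the unit sphere together with the observation preceding the lemma. The only difference is that you supply the continuity of $v\mapsto \|U_v\|_{2\rightarrow 1}$ (via the pseudoinverse formula $U_v=T_v^*(T_vT_v^*)^{-1}$ and the constancy of the rank of $T_v$), a detail the paper leaves implicit.
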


\begin{proof} This is the preceding observation plus compactness of the unit sphere in $\mathbb{C}^d$: the constant $M=\max_{\|v\|=1}\|U_v\|_{2\rightarrow 1}$ does the job.
\end{proof}

\subsection{From weak containment to weak containment \`a la Zimmer}\label{weakcon}

Recall that, if $\pi,\rho$ are unitary representations of a discrete group $G$, the representation $\pi$ is weakly contained in the representation $\rho$ (i.e. $\pi\preceq\rho$) if every function of positive type associated with $\pi$ can be pointwise approximated by finite sums of positive definite type associated with $\rho$. If $\pi$ is irreducible, this is equivalent to require that every normalized function of positive type associated with $\pi$ can be pointwise approximated by normalized functions of positive type associated with $\rho$ (see Proposition F.1.4 in \cite{BdlHV}).

Zimmer introduced in Definition 7.3.5 of \cite{Zim} a different, inequivalent notion of weak containment. A $n\times n$-submatrix of $\pi$ is a function 
$$G\rightarrow M_n(\Com):g\mapsto (\langle \pi(g)e_i,e_j\rangle)_{1\leq i,j\leq n}$$
where $\{e_1,...,e_n\}$ is an orthonormal family in $\Hil_\pi$. Say that $\pi$ is weakly contained in $\rho$ in Zimmer's sense (i.e. $\pi\preceq_Z \rho$) if, for every $n>0$, every $n\times n$-submatrix of $\pi$ can be pointwise approximated by $n\times n$-submatrices of $\rho$. The exactly relation with the classical notion recalled above, is worked out in Remark F.1.2(ix) in \cite{BdlHV}; in particular, when $\pi$ is irreducible, $\pi\preceq\rho$ implies $\pi\preceq_Z \rho$. Our first goal will be to make the latter statement quantitative. For this we need a definition.

Let $\phi$ be associated with $\sigma$, as in Definition \ref{good}. Let $v$ be a unit vector in $\Hil_\sigma$ such that $\phi=\phi_{\sigma,v}$. Let $e_1,...,e_d$ be an orthonormal basis of $\mathbb{C}^d$; by lemma \ref{compact}, we find functions $f_1,...,f_d\in\mathbb{C}S^{k(\sigma)}$, with $\max_i \|f_i\|_1\leq M$, such that $\sigma(f_i)v=e_i\,(i=1,...,d)$.

\begin{lem}\label{Zimmer} Let $\pi\in \Rep(G,\Hil)$ be a unitary representation. Assume there is $\varepsilon>0$ and a unit vector $\eta\in\mathcal{H}$ such that for $s\in S^{2k(\sigma)+1}$ we have $|\langle\pi(s)\eta,\eta\rangle-\langle\sigma(s)v,v\rangle|<\varepsilon$. Set $\eta_i=\pi(f_i)\eta$. Then for $i,j=1,...,d$ and $g\in S$:
$$|\langle\sigma(g)e_i,e_j\rangle-\langle\pi(g)\eta_i,\eta_j\rangle|\leq\varepsilon M^2.$$
\end{lem}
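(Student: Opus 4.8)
The plan is to linearize both coefficient functions over the group ring $\mathbb{C}G$, turning the difference on the left-hand side into a single sum over group elements weighted by the coefficients of one group-ring element, and then to control that sum using the hypothesis together with submultiplicativity of the $\ell^1$-norm.

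First I would unfold the definitions $e_i=\sigma(f_i)v$ and $\eta_i=\pi(f_i)\eta$ and push everything onto the cyclic vectors via adjoints. For any unitary representation $\rho$ and any $f=\sum_s c_s s\in\mathbb{C}G$ one has $\rho(f)^*=\rho(f^*)$, where $f^*=\sum_s\overline{c_s}\,s^{-1}$, and $\rho$ is multiplicative on $\mathbb{C}G$. Hence, writing $h:=f_j^*\,g\,f_i\in\mathbb{C}G$ (product in the group ring) and $h=\sum_u h_u u$, we obtain
\[
\langle\sigma(g)e_i,e_j\rangle=\langle\sigma(f_j^*\,g\,f_i)v,v\rangle=\sum_u h_u\langle\sigma(u)v,v\rangle,
\]
and in exactly the same way
\[
\langle\pi(g)\eta_i,\eta_j\rangle=\langle\pi(f_j^*\,g\,f_i)\eta,\eta\rangle=\sum_u h_u\langle\pi(u)\eta,\eta\rangle.
\]
The whole point is that both quantities are now finite linear combinations, with the \emph{same} coefficients $h_u$, of the coefficient functions $u\mapsto\langle\sigma(u)v,v\rangle$ and $u\mapsto\langle\pi(u)\eta,\eta\rangle$.

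Next I would locate the support of $h$. Since $f_i,f_j$ are supported in $S^{k(\sigma)}$ and $S$ is symmetric, the adjoint $f_j^*$ is also supported in $S^{k(\sigma)}$; as $g\in S$, the convolution $h$ is supported in $S^{k(\sigma)}\cdot S\cdot S^{k(\sigma)}=S^{2k(\sigma)+1}$ (using $e\in S$, so that the $S^k$ are balls). This is precisely the role of the exponent $2k(\sigma)+1$ in the hypothesis: it ensures that $|\langle\sigma(u)v,v\rangle-\langle\pi(u)\eta,\eta\rangle|<\varepsilon$ holds for every $u$ occurring in the support of $h$.

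Finally I would subtract the two displays and estimate termwise:
\[
|\langle\sigma(g)e_i,e_j\rangle-\langle\pi(g)\eta_i,\eta_j\rangle|\leq\sum_u|h_u|\,\bigl|\langle\sigma(u)v,v\rangle-\langle\pi(u)\eta,\eta\rangle\bigr|\leq\varepsilon\,\|h\|_1.
\]
It then only remains to bound $\|h\|_1=\|f_j^*\,g\,f_i\|_1$, and here I would invoke submultiplicativity of the $\ell^1$-norm under convolution together with $\|f_j^*\|_1=\|f_j\|_1\leq M$, $\|g\|_1=1$, and $\|f_i\|_1\leq M$, giving $\|h\|_1\leq M^2$ and hence the claimed estimate $\varepsilon M^2$. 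The argument is essentially routine once the algebraic rewriting is in place; the only step demanding any attention is the support computation, which pins down the exponent $2k(\sigma)+1$ and guarantees that the good-vector hypothesis applies to every term of the sum.
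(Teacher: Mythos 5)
Your proposal is correct and follows essentially the same route as the paper: both expand $\langle\sigma(g)e_i,e_j\rangle$ and $\langle\pi(g)\eta_i,\eta_j\rangle$ through the group ring, observe that every group element appearing lies in $S^{2k(\sigma)+1}$, and bound the resulting sum by $\varepsilon\|f_i\|_1\|f_j\|_1\leq\varepsilon M^2$. Your packaging of the double sum as a single convolution $h=f_j^*\,g\,f_i$ with $\|h\|_1\leq M^2$ is just a tidier presentation of the paper's termwise estimate over pairs $(s,t)$.
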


\begin{proof} For $g\in S$:
$$|\langle\sigma(g)e_i,e_j\rangle-\langle\pi(g)\eta_i,\eta_j\rangle|=|\langle\sigma(g)\sigma(f_i)v,\sigma(f_j)v\rangle-\langle\pi(g)\pi(f_i)\eta,\pi(f_j)\eta\rangle|$$
$$=|\sum_{s,t\in G} f_i(s)\overline{f_j(t)}(\langle\sigma(t^{-1}gs)v,v\rangle - \langle\pi(t^{-1}gs)\eta,\eta\rangle)|$$
$$\leq\sum_{s,t\in G}|f_i(s)||f_j(s)||\langle\sigma(t^{-1}gs)v,v\rangle - \langle\pi(t^{-1}gs)\eta,\eta\rangle|.$$
Since the supports of the $f_i$'s are contained in $S^{k(\sigma)}$, and $t^{-1}gs\in S^{2k(\sigma)+1}$ for $s,t\in S^{k(\sigma)}$, we get using the assumption:
$$|\langle\sigma(g)e_i,e_j\rangle-\langle\pi(g)\eta_i,\eta_j\rangle|\leq \varepsilon\sum_{s,t\in G}|f_i(s)||f_j(t)|=\varepsilon\|f_i\|_1\|f_j\|_1\leq\varepsilon M^2.$$
\end{proof}

In the previous proof, by applying the Gram-Schmidt orthonormalization process to the $\eta_i$'s, it is possible to show that the $d\times d$-submatrix $(\langle\sigma(\cdot)e_i,e_j\rangle)_{1\leq i,j\leq d}$ of $\sigma$, is close on $S$ to some $d\times d$-submatrix of $\alpha$, with an explicit bound; but we don't need it at this point.

\subsection{Quantifying Wang's theorem}

Let $\mathcal{H}_\sigma$ be the ($d$-dimensional) Hilbert space of $\sigma$, and let $\mathcal{H}_{\overline{\sigma}}$ be the conjugate Hilbert space (with complex conjugate scalar multiplication and complex conjugate inner product), equipped with the conjugate representation $\overline{\sigma}$. Form the tensor product $\mathcal{H}_{\overline{\sigma}}\otimes\mathcal{H}_\pi$, carrying the representation $\overline{\sigma}\otimes\pi$. Set $\xi_i=e_i\otimes\eta_i$ and $\xi=\sum_{i=1}^d \xi_i\in \mathcal{H}_{\overline{\sigma}}\otimes\mathcal{H}_\pi$, where the $e_i$'s and $\eta_i$'s are as in the section above; observe that the $\xi_i$'s are pairwise orthogonal. We need an estimate on how $\xi$ is moved by $\overline{\sigma}\otimes\pi$.

$$\|\xi-(\overline{\sigma}\otimes\pi)(g)\xi\|^2=2\|\xi\|^2-2\re\langle(\overline{\sigma}\otimes\pi)(g)\xi,\xi\rangle$$
$$=2\sum_{i=1}^d\|\xi_i\|^2-2\sum_{i,j=1}^d \re\langle(\overline{\sigma}\otimes\pi)(g)\xi_i,\xi_j\rangle$$
$$=2\sum_{i=1}^d\|\eta_i\|^2-2\sum_{i,j=1}^d \re\langle e_j,\sigma(g)e_i\rangle\langle\pi(g)\eta_i,\eta_j\rangle.$$
Observe that for every $g\in G$ we have: $d=\sum_{i,j=1}^d\langle\sigma(g)e_i,e_j\rangle\langle e_j,\sigma(g)e_i\rangle$ as the $e_i$'s are an orthonormal basis. Subtracting and adding $2d$ to the previous formula we get:
$$\|\xi-(\overline{\sigma}\otimes\pi)(g)\xi\|^2=2[\sum_{i=1}^d(\|\eta_i\|^2-1)]- 2\sum_{i,j=1}^d\re\langle e_j,\sigma(g)e_i\rangle(\langle\pi(g)\eta_i,\eta_j\rangle-\langle\sigma(g)e_i,e_j\rangle)$$
hence, using Cauchy-Schwarz:
\begin{equation}\label{mainineq}
\|\xi-(\overline{\sigma}\otimes\pi)(g)\xi\|^2 \leq 2\sum_{i=1}^d|\|\eta_i\|^2-1| + 2\sum_{i,j=1}^d|\langle\pi(g)\eta_i,\eta_j\rangle-\langle\sigma(g)e_i,e_j\rangle|
\end{equation}

Theorem \ref{mainresult} will follow immediately form the next Proposition, together with lemma \ref{burnside}

\begin{prop}\label{le:Prop119}
Let $G$ be a discrete Kazhdan group, $S$ a finite symmetric generating set with $e\in S$, and let $\phi$ be a normalized positive definite function on $G$ associated with a finite-dimensional unitary irreducible representation $\sigma$ of $G$. For every $0<\delta<1$ there exists $\varepsilon_{\phi, \delta}>0$ such that for every $\pi \in \Rep(G,\Hil)$, and every unit vector $x \in \Hil$ such that $|\phi(s)-\phi_{\pi,x}(s)|<\varepsilon_{\phi, \delta}$ for $s\in S^{2k(\sigma)+1}$, there exists a unit vector $x' \in \Hil$ with $\|  x-x' \|\leq\delta$ such that the restriction of $\pi$ to the span of $\pi(G)x'$ is isomorphic to $\sigma$.
\end{prop}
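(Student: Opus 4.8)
The plan is to lift the problem to the tensor product representation $\overline\sigma\otimes\pi$ on $\Hil_{\overline\sigma}\otimes\Hil_\pi$, show that the explicit vector $\xi=\sum_{i=1}^d e_i\otimes\eta_i$ (with $\eta_i=\pi(f_i)x$, as set up in Section \ref{weakcon}) is almost invariant, invoke property (T) through Proposition \ref{PropBHV} to replace $\xi$ by a genuine invariant vector, and finally read off from that invariant vector an honest copy of $\sigma$ inside $\pi$ lying close to $x$.

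First I would set $\eta=x$ in Lemma \ref{Zimmer}; with $\varepsilon:=\varepsilon_{\phi,\delta}$ this yields $|\langle\sigma(g)e_i,e_j\rangle-\langle\pi(g)\eta_i,\eta_j\rangle|\le\varepsilon M^2$ for all $g\in S$ and all $i,j$. Taking $g=e\in S$ gives $|\delta_{ij}-\langle\eta_i,\eta_j\rangle|\le\varepsilon M^2$, in particular $\bigl|\,\|\eta_i\|^2-1\,\bigr|\le\varepsilon M^2$, so that $\bigl|\,\|\xi\|^2-d\,\bigr|\le d\varepsilon M^2$. Feeding these two families of estimates into the inequality \eqref{mainineq} bounds $\|\xi-(\overline\sigma\otimes\pi)(g)\xi\|^2$ by $2(d+d^2)\varepsilon M^2$ for every $g\in S$, so $\xi$ is $(S,\kappa)$-invariant for $\overline\sigma\otimes\pi$ with $\kappa=\sqrt{2(d+d^2)\varepsilon M^2}$, a quantity I can make as small as I like by shrinking $\varepsilon$. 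Choosing $\varepsilon$ so that $\kappa\le\varepsilon_0\delta'\|\xi\|$, where $\varepsilon_0$ is the Kazhdan constant of $S$ and $\delta'$ is a small parameter depending only on $\delta$ and $d$ (to be fixed at the end), Proposition \ref{PropBHV} applied to $\overline\sigma\otimes\pi$ gives $\|\xi-P\xi\|\le\delta'\|\xi\|$, where $P$ is the projection onto the $(\overline\sigma\otimes\pi)$-invariant vectors.

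The next step is the standard dictionary $\Hil_{\overline\sigma}\otimes\Hil_\pi\cong\mathrm{HS}(\Hil_\sigma,\Hil_\pi)$, under which $\overline u\otimes w$ corresponds to the operator $u'\mapsto\langle u',u\rangle w$ and the $G$-action becomes $T\mapsto\pi(g)T\sigma(g)^{-1}$; thus $(\overline\sigma\otimes\pi)$-invariant vectors are exactly the intertwiners $T\colon\Hil_\sigma\to\Hil_\pi$ with $\pi(g)T=T\sigma(g)$. Let $T$ be the intertwiner corresponding to $P\xi$ and $T_\xi$ the operator corresponding to $\xi$, so that $T_\xi e_i=\eta_i$ and $\|T-T_\xi\|_{HS}=\|P\xi-\xi\|\le\delta'\|\xi\|$. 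Since $\sigma$ is irreducible and finite-dimensional, $\ker T$ is $\sigma$-invariant and $T^*T$ is a self-intertwiner of $\sigma$; for $\varepsilon$ (hence $\delta'$) small one has $T\ne0$, so $\ker T=0$ and Schur's lemma gives $T^*T=\lambda I$ with $\lambda>0$. Writing $T=\sqrt\lambda\,W$ with $W$ an isometry, the subspace $\mathcal K:=\mathrm{ran}\,T=W\Hil_\sigma$ is $\pi$-invariant and $\pi|_{\mathcal K}\cong\sigma$. Moreover $\bigl|\,\|T\|_{HS}-\|T_\xi\|_{HS}\,\bigr|\le\delta'\|\xi\|$ and $\|T_\xi\|_{HS}=\|\xi\|\to\sqrt d$, so from $\lambda d=\|T\|_{HS}^2$ we read off $\lambda\to1$ as $\varepsilon\to0$.

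It remains to locate $x$ relative to $\mathcal K$ and to define $x'$; this is where I expect the real work to lie, since property (T) only manufactures \emph{some} copy of $\sigma$, whereas the statement demands one through which $x$ is genuinely approximated. The key computation is that $x$ almost maps to $v$ under the adjoint: indeed $\langle x,\eta_i\rangle=\langle x,\pi(f_i)x\rangle=\sum_s\overline{f_i(s)}\,\overline{\phi_{\pi,x}(s)}$ while $\langle v,e_i\rangle=\langle v,\sigma(f_i)v\rangle=\sum_s\overline{f_i(s)}\,\overline{\phi(s)}$, and since $\mathrm{supp}\,f_i\subseteq S^{k(\sigma)}\subseteq S^{2k(\sigma)+1}$ the hypothesis yields $|\langle x,\eta_i\rangle-\langle v,e_i\rangle|\le\|f_i\|_1\varepsilon\le M\varepsilon$. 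Hence $T_\xi^*x=\sum_i\langle x,\eta_i\rangle e_i$ satisfies $\|T_\xi^*x-v\|\le M\sqrt d\,\varepsilon$, and therefore $\|T^*x\|\to\|v\|=1$. Consequently $\|P_{\mathcal K}x\|^2=\lambda^{-1}\|T^*x\|^2\to1$, so $x$ lies within $\delta$ of $\mathcal K$ once $\varepsilon$ is small enough. I then set $x'=P_{\mathcal K}x/\|P_{\mathcal K}x\|$: it is a unit vector with $\|x-x'\|\le\delta$, and because $\mathcal K\cong\sigma$ is irreducible every nonzero vector of $\mathcal K$ is cyclic, so $\overline{\mathrm{span}}\,\pi(G)x'=\mathcal K$ carries a subrepresentation isomorphic to $\sigma$. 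The argument is uniform in $(\pi,x)$ because every constant ($d$, $M$, $k(\sigma)$, $\varepsilon_0$, and the limits of $\lambda$ and $\|T^*x\|$) depends only on $\phi$ and $S$; the main obstacle is to organize the chain of dependencies $\varepsilon\rightarrow\kappa\rightarrow\delta'\rightarrow\|x-x'\|$ so that a single threshold $\varepsilon_{\phi,\delta}$ works, together with the two ``$\approx1$'' facts ($\lambda\approx1$ and $\|T^*x\|\approx1$) that are exactly what forces $x$ to approach $\mathcal K$ rather than merely guaranteeing that $\pi$ contains a copy of $\sigma$ somewhere.
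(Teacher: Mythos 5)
Your proposal is correct and follows the paper's argument essentially verbatim through its main body: the same vector $\xi=\sum_{i=1}^d e_i\otimes\eta_i$ in $\Hil_{\overline{\sigma}}\otimes\Hil$, the same almost-invariance bound obtained from Lemma \ref{Zimmer} and inequality \eqref{mainineq}, Proposition \ref{PropBHV} applied to $\overline{\sigma}\otimes\pi$, the Hilbert--Schmidt identification of invariant vectors with intertwiners, and Schur's lemma to extract a copy of $\sigma$. The only divergence is the endgame --- the step you rightly flag as the crux, namely forcing $x$ itself (and not merely some vector) close to the newly found copy of $\sigma$. The paper handles this in one line by normalizing the data at the outset: it takes $e_1=v$ and $f_1=\delta_e$, so that $\eta_1=x$; writing the invariant vector as $\xi'=\sum_i e_i\otimes\zeta_i$ then gives $\|x-\zeta_1\|\leq\|\xi-\xi'\|$ directly, and $x'=\zeta_1/\|\zeta_1\|$ does the job. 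Your alternative --- showing $T_\xi^*x\approx v$ from the hypothesis on $\phi_{\pi,x}$, and combining $T^*T=\lambda I$ with $\lambda\approx 1$ to conclude $\|P_{\mathcal{K}}x\|\approx 1$ --- is also valid (indeed $\|T^*x\|^2=\lambda\|P_{\mathcal{K}}x\|^2$ as you assert) and avoids the special choice of $e_1$ and $f_1$, at the price of a longer chain of estimates that still has to be assembled into one explicit threshold $\varepsilon_{\phi,\delta}$, as you acknowledge. Both endgames produce constants depending only on $d$, $M$, $\varepsilon_0$ and $\delta$, so the quantitative content is the same; the paper's normalization trick is worth internalizing as the shorter route.
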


\begin{proof} Set $d=\dim\sigma$, let $v$ be a unit vector in $\Hil_\sigma$ such that $\phi(g)=\langle\sigma(g)v,v\rangle$ for every $g\in G$. As in section \ref{weakcon}, for an orthonormal basis $e_1,...,e_d$ of $\mathbb{C}^d$, we find functions $f_1,...,f_d\in\mathbb{C}S^{k(\sigma)}$, with $\max_i \|f_i\|_1\leq M$, such that $\sigma(f_i)v=e_i\,(i=1,...,d)$. 

Let $0<\varepsilon_0< 2$ be such that $(S,\varepsilon_0)$ is a Kazhdan pair for $G$. Fix $\delta$ with $0<\delta<1$, and set 
$$\varepsilon_{\phi,\delta}=\varepsilon=\frac{\delta^2\varepsilon_0^2}{24d(d+1)M^2}.$$
 Let $\pi\in\Rep(G,\Hil)$ and $x\in\Hil$ be a unit vector with $|\phi_{\pi,x}(s)-\phi(s)|<\varepsilon$ for $s\in S^{2k(\sigma)+1}$. Set $\eta_i=\pi(f_i)x$. We may assume that $e_1=v$ and the function $f_1$ is $\delta_e$, so that $\eta_1=x$. We want to prove that the vector $\xi = \sum_{i=1}^d (e_i\otimes \eta_i) \in \Hil_{\overline{\sigma}}\otimes\Hil$ is $(S,t\varepsilon_0)$-invariant for some $0<t<1$, in order to apply Proposition \ref{PropBHV}. 

For $g\in S$ we have, by lemma \ref{Zimmer} and the inequality \ref{mainineq}:
$$\|\xi-(\overline{\sigma}\otimes\pi)(g)\xi\|^2\leq 2d\varepsilon M^2+2d^2\varepsilon M^2=2d(d+1)\varepsilon M^2=\frac{\delta^2\varepsilon_0^2}{12}$$
Again by lemma \ref{Zimmer}, evaluated at $g=e$, we have: $|\|\eta_i\|^2-1|\leq\varepsilon M^2<\frac{1}{2}$, hence $\frac{1}{2}\leq \|\eta_i\|^2\leq \frac{3}{2}$ and $\frac{d}{2}\leq\|\xi\|^2=\sum_{i=1}^d \|\eta_i\|^2\leq\frac{3d}{2}$.
So that, for $g\in S$:
$$\|\xi-(\overline{\sigma}\otimes\pi)(g)\xi\|^2\leq \frac{\delta^2\varepsilon_0^2}{6d}\|\xi\|^2.$$
By Proposition \ref{PropBHV}, there exists a $G$-fixed $\xi'\in\Hil_{\overline{\sigma}}\otimes\Hil$ such that $\|\xi-\xi'\|^2\leq\frac{\delta^2}{6d}\|\xi\|^2$.
Write $\xi'=\sum_{i=1}^d e_i\otimes\zeta_i$, so that  $\|\xi-\xi'\|^2=\sum_{i=1}^d\|\eta_i-\zeta_i\|^2$.
Identify $\mathcal{H}_{\overline{\sigma}}\otimes\mathcal{H}$ with the space of linear operators from $\mathcal{H}_\sigma$ to $\mathcal{H}$ (endowed with the Hilbert-Schmidt norm), via $u\otimes y\mapsto (w\mapsto \langle w,u\rangle y$). Then $\xi'$ identifies with the operator $w\mapsto\sum_{i=1}^d \langle w,e_i\rangle\zeta_i$, which is therefore an intertwining operator between $\sigma$ and $\pi$. The image of this operator, which is $span\{\zeta_1,...,\zeta_d\}$, carries a sub-representation of $\pi$ unitarily equivalent to $\sigma$ (by Schur's lemma). Set $x''=\zeta_1$, then:
$$\|x-x''\|^2=\|\eta_1-\zeta_1\|^2\leq \sum_{i=1}^d\|\eta_i-\zeta_i\|^2=\|\xi-\xi'\|^2\leq\frac{\delta^2}{6d}\|\xi\|^2\leq\frac{\delta^2}{6d}\frac{3d}{2}=\frac{\delta^2}{4},$$
i.e. $\|x-x''\|\leq\frac{\delta}{2}$. Finally, set $x'=\frac{x''}{\|x''\|}$, a unit vector in $\Hil$. Then by the triangle inequality:
$$\|x-x'\|\leq\|x-x''\|+\|x''-x'\|=\|x-x''\| + \|x''\||1-\frac{1}{\|x''\|}|$$
$$=\|x-x''\|+|\|x''\|-\|x\||\leq 2\|x-x''\|\leq\delta.$$
This concludes the proof.
\end{proof}

\begin{question} In the previous proof, the constant $\varepsilon_{\phi,\delta}$ depends on $\sigma$ through the dimension $d$ and the constant $M$ from lemma \ref{compact}. By Theorem 2.6 in \cite{Wa}, a discrete Kazhdan group has finitely many unitary irreducible representations of a given finite dimension (up to unitary equivalence), so Theorem \ref{mainresult} can be made uniform over all unitary irreducible representations $\sigma$ with dimension less than a given dimension. Can it be made uniform over {\it all} finite-dimensional unitary representations?
\end{question}

\section{Proof of Theorem \ref{thm:generic_rep}}

Let $\{U_n\}$ be a countable basis of open sets in the unit sphere $K$ of $\Hil$, and let $\Phi$ be the set of all positive definite functions on $G$ defining irreducible finite dimensional representations. Notice that the set $X' \subseteq \Rep(G,\Hil)$ of all representations $\pi$ such that for every $n \in \NN$ and every $\delta>0$ there exist $m >0$, $x \in U_n$, $x_i \in K$, $c_i \in \CC \setminus \{0\}$, and $\phi_i \in \Phi$, $i \leq m$, such that the $x_i$'s are pairwise orthogonal, $x=\sum c_i x_i$, and each $x_i$ is $(\pi, \phi_i,\varepsilon_{\phi_i, \delta'_i})$-good, where $\delta'_i=\frac{\delta}{|c_i|. m}$, and $\varepsilon_{\phi_i, \delta'_i}$ is given by Theorem \ref{mainresult}, is a $G_\delta$ set. Indeed, for fixed $n$, $\delta$, $m$, $x$, $\bar{x}=(x_1, \ldots, x_m)$, $\bar{c}=(c_1,\ldots, c_m)$, $\bar{\phi}=(\phi_1, \ldots, \phi_m)$ as above, the set
\[ V^{n,\delta,m}_{x, \bar{x},\bar{c},\bar{\phi}}= \{ \pi \in \Rep(G,\Hil): \mbox{ each } x_i \mbox{ is }
 (\pi, \phi_i,\varepsilon_{\phi_i, \delta'_i})\mbox{-good} \} \]
is clearly open. We also put $V^{n,\delta,m}_{x, \bar{x},\bar{c},\bar{\phi}}$ to be the empty set if the $x_i$'s are not pairwise orthogonal or $x \neq \sum c_i x_i$. Now we can define $X'$ by
\[ X'=\bigcap_{n \in \NN} \bigcap_{\delta \in \QQ^{+}} \bigcup_{m \in \NN} \bigcup_{x \in U_n} \bigcup_{\bar{x} \in K^m} \bigcup_{\bar{c} \in \CC^m} \bigcup_{\bar{\phi} \in \Phi^m} V^{n,\epsilon,m}_{x, \bar{x},\bar{c},\bar{\phi}}, \]
which is a $G_\delta$ condition.

 Moreover, $X'$ is dense in $\Rep(G,\Hil)$ as it contains all direct sums of finite-dimensional representations, which, by our assumption, are dense in $\Rep(G, \Hil)$. This is because it is easy to see that for every such sum $\pi$ there are densely many elements $x \in K$ of the form $\sum c_i x_i$, where $x_i$ are pairwise orthogonal unit vectors, and each $x_i$ is $(\pi,\phi_i,\delta)$-good for some $\phi_i$ and every $\delta>0$.
	
Now we show that every representation in $X'$ is a direct sum of finite-dimensional representations. Fix $\pi \in X'$. Using Zorn's lemma, we can decompose $\Hil$ into $\Hil_0$ and $\Hil_1$ such that $\Hil_0$ is the direct sum of all finite-dimensional representations contained in $\pi$. For $i=0,1$, let $P_{\Hil_i}$ be the orthogonal projection of $\Hil$ on $\Hil_i$. Suppose that $\Hil_1$ is not trivial, and fix $0<\delta<1$, $x \in K$, pairwise orthogonal $x_i \in K$ and $c_i \in \CC \setminus \{0\}$, $i \leq m$, such that $x=\sum c_i x_i$, each $x_i$ is $(\pi, \phi_i,\varepsilon_{\phi_i,\frac{\delta}{|c_i|. m}})$-good for some $\phi_i \in \Phi$, and $\left\| x-P_{\Hil_0}x \right\|>\delta$ (the last condition can be satisfied by choosing $x$ in an appropriate $U_n$.) By Theorem \ref{mainresult}, there exist $x'_i \in K$, $i \leq m$, inducing irreducible finite-dimensional representations, and such that $\left\| x_i-x'_i \right\|<\frac{\delta}{|c_i|.m}$, that is, $\left\| x -\sum c_i x'_i \right\|< \delta$. But then, clearly, $x'_{i_0} \not \in \Hil_0$ for some $i_0 \leq m$, as if it was not the case, we would get that $\left\| x- \sum c_i x'_i \right\| \geq \left\| x-P_{\Hil_0}x \right\| > \delta$. Since $P_{\Hil_1}$ is a $G$-intertwiner, the image under $P_{\Hil_1}$ of the linear span of $\pi(G)x'_{i_0}$, is an invariant subspace of $\Hil_1$, which is a contradiction.
	
Now let $X''$ be the set of all those representations that contain every finite dimensional representation with infinite multiplicity. As $G$ is a Kazhdan group, we can see that $X''$ is given by a $G_\delta$ condition. Indeed, for $[\sigma]$ the isomorphism class of a finite-dimensional unitary irreducible representation of $G$, and $n>0$, let $V_{[\sigma],n}$ be the set of representations $\pi\in\Rep(G,\Hil)$ such that $[\sigma]$ appears in $\pi$ with multiplicity at least $n$. Clearly $V_{[\sigma],n}$ is open and
$$X''=\bigcap_{[\sigma]}\bigcap_n V_{[\sigma],n},$$
where the intersection is countable because there are countably many $[\sigma]$'s.

By our assumption on $C^*(G)$, the set $X''$ is dense. Thus, $X=X' \cap X''$ is a dense $G_\delta$ set, all the representations of which are direct sums of finite dimensional representations, each appearing with infinite multiplicity. Clearly, all elements in $X$ are conjugate.
\hfill $\qed$

\begin{remark}
The converse of Theorem \ref{thm:generic_rep} also follows from Theorem 2.5 in \cite{KLP}. That is, if either $G$ does not have property (T), or $C^*(G)$ is not residually finite-dimensional, then all classes in $\Rep(G,\Hil)$ are meager. Indeed, Theorem 2.5 from \cite{KLP} says: if for a separable C*-algebra $A$ the set of isolated points in $\hat A$ is not dense, then the restriction of the action of $U(\Hil)$ by conjugation on a dense $G_\delta$ invariant subset of $\Rep(A,\Hil)$ is turbulent. That, by the definition of turbulence, in particular implies that every class in $\Rep(G,\Hil)$ is meager. Now take $A=C^*(G)$: as isolated points in $\hat G$ correspond to finite-dimensional representations, it follows that when $G$ does not have property (T), $\hat{G}$ does not have isolated points, by Theorem 2.1 in Wang \cite{Wa}; when $C^*(G)$ is not residually finite-dimensional, then the isolated points in $\hat{G}$ are not dense by Archbold's main result in \cite{Ar}.
\end{remark}

\section{Proof of Theorem \ref{kechris}}

For a unitary representation $\pi$, we denote by $\infty\cdot\pi$ the $\ell^2$-direct sum of countably many copies of $\pi$.

\begin{lem}\label{koopman} Let $H$ be a locally compact group. Assume that $H$ has (up to unitary equivalence) countably many finite-dimensional irreducible unitary representations $\sigma_1,\sigma_2,...$. Then the representation $\oplus_{n=1}^\infty \infty\cdot\sigma_n$ is unitarily equivalent to a Koopman representation. 
\end{lem}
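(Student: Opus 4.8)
The plan is to realize the representation $\oplus_{n=1}^\infty \infty\cdot\sigma_n$ explicitly as the Koopman representation of a natural measure-preserving action. The guiding principle is the classical fact that Gaussian or Bernoulli-type actions manufacture Koopman representations whose structure mirrors a given representation, but here the finite-dimensional setting suggests something even more direct: build a compact-model action out of the $\sigma_n$'s themselves. Concretely, I would first handle a single finite-dimensional irreducible $\sigma$ of dimension $d$ on $\mathbb{C}^d$, and try to produce an action whose Koopman representation contains $\infty\cdot\sigma$; then assemble the countably many pieces.

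For a single $\sigma$, the cleanest route I can see is to let $H$ act on the unitary group $U_d(\CC)$ (a compact group carrying its normalized Haar measure $\mu$) by left translation through $\sigma$, i.e. $\alpha(h,A)=\sigma(h)A$. This is measure-preserving since Haar measure is left-invariant, and the Koopman representation on $L^2(U_d(\CC),\mu)$ decomposes, via the Peter--Weyl theorem, into matrix coefficients of irreducible representations $\tau$ of $U_d(\CC)$, each appearing with multiplicity $\dim\tau$. Pulling back along $\sigma$, the standard representation of $U_d(\CC)$ restricts to $\sigma$, so $\sigma$ appears inside $L^2(U_d(\CC),\mu)$ with multiplicity $d\geq 1$; more usefully, one checks that among the Peter--Weyl summands the copies of $\sigma$ (and its tensor powers, which are irrelevant to us) occur. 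To force infinite multiplicity of exactly $\sigma$, I would instead take the product action on $\prod_{k=1}^\infty U_d(\CC)$ with the product Haar measure, or equivalently pass to a suitable Gaussian/infinite-product model, so that the relevant isotypic component becomes infinite-dimensional, yielding $\infty\cdot\sigma$ inside the Koopman representation of a single compact-model system.

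To obtain the full direct sum $\oplus_{n=1}^\infty \infty\cdot\sigma_n$, I would take the product action $\alpha=\prod_{n=1}^\infty \alpha_n$ on the product probability space $\prod_n (X_n,\mu_n)$, where each $(X_n,\alpha_n)$ is the system realizing $\infty\cdot\sigma_n$. The Koopman representation of a product action decomposes as a tensor product over the factors, and its $L^2_0$ part contains the direct sum of the $L^2_0$ parts of the factors as a subrepresentation; intersecting these contributions gives every $\infty\cdot\sigma_n$ as a subrepresentation with the trivial representation excluded (the constants having been removed in passing to $L^2_0$). The countability hypothesis on the irreducible finite-dimensional representations is exactly what makes this countable product a standard probability space, so that the resulting Koopman representation is a legitimate representation on a separable Hilbert space.

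The main obstacle, and the step I expect to require the most care, is the bookkeeping of multiplicities: ensuring that the Koopman representation is \emph{exactly} $\oplus_n \infty\cdot\sigma_n$ and not some larger representation containing extraneous irreducibles coming from the tensor-power summands produced by Peter--Weyl and by the tensor-product structure of product actions. I would address this by choosing the model actions so that only the target isotypic components survive in $L^2_0$ — for instance by using a Gaussian action built directly from the Hilbert space $\oplus_n \infty\cdot\sigma_n$ viewed as a real orthogonal representation, whose symmetric Fock space Koopman representation is computable and whose first Wiener chaos reproduces the given representation — and then arguing that the chaos decomposition, together with the assumption that the $\sigma_n$ exhaust the finite-dimensional irreducibles, identifies the isomorphism type on the nose. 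Verifying that the higher chaos do not introduce new finite-dimensional irreducibles outside our list, or that they can be absorbed, is the delicate point; the countability assumption and the explicit nature of symmetric/tensor powers of finite-dimensional representations are the tools I would lean on to close it.
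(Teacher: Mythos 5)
Your core idea --- translation actions on compact groups manufactured from the $\sigma_n$'s, analyzed via Peter--Weyl --- is the same as the paper's, but you complicate the multiplicity bookkeeping to the point of leaving the decisive step open. The paper boosts multiplicities not with infinite products or Gaussian actions but with a plain disjoint union: it takes $K_n=\overline{\sigma_n(H)}\subseteq U(N_n)$ with normalized Haar measure $m_n$ (your $U_d(\mathbb{C})$ works just as well), forms $X=\coprod_{n,p\geq 1}K_{n,p}$ where $K_{n,p}$ is a copy of $K_n$ weighted by $2^{-n-p}$, and lets $H$ act by left translation on each piece. Since $L^2$ of a disjoint union is a direct sum, there are no tensor powers and no Wiener chaos to control: the Koopman representation on $L^2(X,\mu)$ is simply $\oplus_n\infty\cdot(\lambda_{K_n}\circ\sigma_n)$, and passing to $L^2_0(X,\mu)$ changes nothing because the constants on the individual pieces already supply the trivial representation with infinite multiplicity.

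The step you flag as ``delicate'' --- that no extraneous irreducibles survive --- is in fact immediate, and you should close it as follows. By Peter--Weyl each $\lambda_{K_n}\circ\sigma_n$ (and likewise each tensor or symmetric power arising in your product or Gaussian models) decomposes into finite-dimensional representations of $H$; every irreducible constituent of a finite-dimensional unitary representation of $H$ is a finite-dimensional irreducible of $H$, hence by the exhaustiveness hypothesis equals some $\sigma_k$. Conversely, the standard representation of $K_n$ on $\mathbb{C}^{N_n}$ is irreducible (because $\sigma_n$ is and $\sigma_n(H)$ is dense in $K_n$), so it occurs in $\lambda_{K_n}$ and restricts along $\sigma_n$ to $\sigma_n$; thus each $\sigma_n$ occurs with infinite multiplicity. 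Since multiplicities on a separable Hilbert space are at most $\aleph_0$, every $\sigma_k$ occurs with multiplicity exactly $\aleph_0$ and nothing else occurs, which identifies the Koopman representation as $\oplus_n\infty\cdot\sigma_n$ on the nose. With this absorption argument your product/Gaussian route does go through, but it buys you nothing over the disjoint union, and the ``exactness'' worry that pushes you toward chaos decompositions dissolves once the exhaustiveness hypothesis is used in this way.
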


\begin{proof} View $\sigma_n$ as a continuous homomorphism $H\rightarrow U(N_n)$. Let $K_n$ denote the closure of $\sigma_n(H)$ in $U(N_n)$, so that $K_n$ is a compact group (on which $H$ acts by left translations by elements of $\sigma_n(H)$). Let $m_n$ denote normalized Haar measure on $K_n$, and let $\lambda_n$ denote the regular representation of $K_n$ on $L^2(K_n,m_m)$. For $p\geq 1$, let $K_{n,p}$ denote a copy of $K_n$ endowed with the measure $2^{-n-p}m_n$. Set $X=\coprod_{n,p}K_{n,p}$, endowed with the $H$-invariant probability measure $\mu=\oplus_{n,p}2^{-n-p}m_n$. Note that the $H$-representations on $L^2(X,\mu)$ and on $L^2_0(X,\mu)$ are equivalent, as $L^2(X,\mu)$ contains the trivial representation with infinite multiplicity.

So it is enough to prove that the $H$-representation $\pi$ on $L^2(X,\mu)$ is equivalent to $\oplus_{n=1}^\infty \infty\cdot\sigma_n$. To see this, first observe that $\pi$ is equivalent to $\oplus_n \infty\cdot\pi_n$, where $\pi_n=\lambda_n\circ\sigma_n$. By Peter-Weyl, $\pi_n$ decomposes as a direct sum of finite-dimensional irreducible representations of $H$, hence of certain $\sigma_k$'s, and moreover $\sigma_n$ is a sub-representation of $\pi_n$ (because the natural representation of $K_n$ on $\mathbb{C}^{N_n}$ is irreducible, hence appears as a sub-representation of $\lambda_n$). This shows that $\oplus_{n} \infty\cdot\pi_n$ is equivalent to $\oplus_{n} \infty\cdot\sigma_n$.
\end{proof}

To prove Theorem \ref{kechris}, observe that a discrete Kazhdan group $G$ satisfies the assumption of lemma \ref{koopman} (by Theorem 2.6 in \cite{Wa}). Let $(\sigma_n)_{n\in\mathbb{N}}$ be an enumeration of all finite-dimensional irreducible unitary representations of $G$. By Theorem \ref{thm:generic_rep} and its proof, the representation $\bigoplus_{n=1}^\infty \infty\cdot\sigma_n$ has a comeager conjugacy class.

In particular, we get the following statement which was proved in \cite{Dol} only for finite abelian groups.
\begin{cor}
Let $G$ be a finite group. Then the set of unitary representations realizable by an action is comeager in $\Rep(G,\Hil)$.
\end{cor}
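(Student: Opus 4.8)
The plan is to deduce the Corollary directly from Theorem \ref{kechris} by verifying that every finite group $G$ satisfies its two hypotheses: being a discrete Kazhdan group, and having finite-dimensional representations dense in $\hat{G}$. Since all the analytic content is already packaged in Theorem \ref{kechris}, the work reduces to two elementary observations.

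First I would verify property (T), which for a finite group is immediate. Given any $\pi\in\Rep(G,\Hil)$, the averaging operator $P=\frac{1}{|G|}\sum_{g\in G}\pi(g)$ is the orthogonal projection onto the subspace of $\pi(G)$-invariant vectors. If a unit vector $\xi$ is almost invariant, then $\|P\xi-\xi\|\leq\frac{1}{|G|}\sum_{g\in G}\|\pi(g)\xi-\xi\|$ is small as well, so $P\xi$ is a genuine invariant vector close to $\xi$. Taking $S=G$, this exhibits a Kazhdan pair, so $G$ is a discrete Kazhdan group.

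Next I would check the density condition. By Maschke's theorem $\CC[G]$ is a finite-dimensional semisimple algebra, hence $\hat{G}$ is a finite set each of whose elements is a finite-dimensional irreducible representation. Thus finite-dimensional representations are not merely dense but exhaust $\hat{G}$; equivalently, $C^*(G)=\CC[G]$ is finite-dimensional and therefore residually finite-dimensional, matching the reformulations discussed in the introduction. Both hypotheses of Theorem \ref{kechris} are now met, and that theorem applies verbatim to yield that the set of representations realizable by an action is comeager in $\Rep(G,\Hil)$.

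There is no genuine obstacle here: the only subtlety is recognizing that the density hypothesis, which is the delicate assumption for infinite groups, becomes automatic in the finite case. The improvement over the result of \cite{Dol}, which handled only finite abelian groups, comes entirely from the general machinery of Theorem \ref{kechris}, since the argument above makes no use of commutativity whatsoever.
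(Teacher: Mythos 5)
Your proposal is correct and matches the paper's (implicit) argument exactly: the corollary is stated as an immediate consequence of Theorem \ref{kechris}, whose two hypotheses --- property (T) and density of finite-dimensional representations in $\hat{G}$ --- hold trivially for finite groups. Nothing further is needed.
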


\begin{remark} Kechris proves (see section (F) in Appendix H of \cite{Ke2}) that, if $G$ is torsion-free abelian, then the set of representations realizable by an action is meager in $\Rep(G,\Hil)$.
\end{remark}

\end{document}